\newcommand\NoBlackBoxes{\global\overfullrule0pt}
\newcommand{\N}{\mathbb{N}}
\let\serieslogo@\relax
\let\@setcopyright\relax
\newtheorem{definition}{Definition}[section]
\newtheorem{theorem}[definition]{Theorem}
\newtheorem{lemma}[definition]{Lemma}
\newtheorem{rems}[definition]{Remarks}
\newtheorem{example}[definition]{Example}
\renewcommand{\P}{{\mathbb{P}}}
\newcommand{\E}{{\mathbb{E}}}
\newcommand{\R}{{\mathbb{R}}}
\renewcommand{\epsilon}{\varepsilon}
\renewcommand{\phi}{\varphi}
\numberwithin{equation}{section}
\begin{document}

\setcounter{page}{1}

\title[Large deviations and positive association]{Large deviation upper bounds for sums of positively associated indicators}

\author[Matthias L\"owe]{Matthias L\"owe}
\address[Matthias L\"owe]{Fachbereich Mathematik und Informatik,
University of M\"unster,
Einsteinstra\ss e 62,
48149 M\"unster,
Germany}

\email[Matthias L\"owe]{maloewe@math.uni-muenster.de}

\author[Franck Vermet]{Franck Vermet}
\address[Franck Vermet]{Laboratoire de Math\'ematiques, UMR CNRS 6205, Universit\'e de Bretagne Occidentale,  6, avenue Victor Le Gorgeu\\
CS 93837\\
F-29238 BREST Cedex 3\\
France}

\email[Franck Vermet]{Franck.Vermet@univ-brest.fr}


\date{\today}

\subjclass[2000]{Primary: 60F10, Secondary: 60C05}

\keywords{Large deviations, positive association, Poisson approximation, Janson inequality}

\newcommand{\wlim}{\mathop{\hbox{\rm w-lim}}}
\newcommand{\na}{{\mathbb N}}
\newcommand{\re}{{\mathbb R}}

\newcommand{\vep}{\varepsilon}

\begin{abstract}
We give exponential upper bounds for $\P(S \le k)$, in particular $\P(S=0)$, where $S$ is a sum of indicator random variables that are positively associated. These bounds
allow, in particular, a comparison with the independent case. We give examples in which we compare with a famous exponential inequality for sums of correlated indicators, the Janson inequality. Here our bound sometimes proves to be superior to Janson's bound.
\end{abstract}

\maketitle

\section{Introduction}
Consider a family of indicator random variables $(X_i)_{i\in I}$ with
$$
\P(X_i=1)=p_i=1-\P(X_i=0).
$$
Let $X=\sum_{i\in I}X_i$ be their sum. Moreover, assume that the random variables are associated (or positively associated), i.e. for all coordinatewise increasing functions
$$
f,g: \R^{|I|} \to \R
$$
we have that
$$
\mathrm{Cov}(f(X_i, i\in I ), g(X_i, i\in I) ) \ge 0.
$$
Association has first been defined and analyzed in \cite{Esary/Proschan/Malkup:1967}.
Associated random variables occur in a variety of situations, e.g. in statistical mechanics in the context of the FKG inequalities, in subgraph count statistics in random graphs or in neural networks. For further application it is worth noting, of course, independent random variables are associated and that, if $Y_1, \ldots, Y_n$ are associated, and $f_i:\R^n \to \R$ is increasing for each $i=1, \ldots n$, then also $Z_1, \ldots Z_n$, with $Z_i = f_i(Y)$ are associated (see
Theorem 2.1 and (P4) in \cite{Esary/Proschan/Malkup:1967}). Here $Y=(Y_1, \ldots, Y_n)$.
Actually this fact allows to construct and/or discover positive association in many examples.
\begin{example}[k-Runs]\label{runs}
\normalfont
Let $Y_1, Y_2, \ldots Y_{n+k-1}$ be i.i.d. random variables with $\P(Y_i=1) =p =1-\P(Y_i=0)$ and for $k \in \N$ let
$$
Z_i = \prod_{j=i}^{i+k-1} Y_i
$$
be the indicator for a $k$-run of 1's starting in i. Then, as the $Z_i$ are increasing functions of the independent and thus associated random variables
$Y_1, \ldots, Y_n$, they are associated. $k$-runs are frequently used e.g. in sequence alignments in mathematical biology. We will be interested in the total
number $Z$ of $k$-runs in a Bernoulli sequence of a given length.
\end{example}

\begin{example}[Subgraph-counts]\normalfont \label{counts}
Let $(Y_{i,j})_{1\le i<j \le n}$ be the i.i.d. Bernoulli random variables occurring in the Erd\"os-R\'enyi random graph $G(n,p)$, i.e.
$Y_{i,j}=1$, if and only if the edge between $i$ and $j$ is present in the realization of the random graph, and this occurs with probability $p$. Otherwise $Y_{i,j}$ is 0. Now let $K_k$ be an arbitrary fixed complete graph, e.g. a triangle for which k=3.
The random variables
$$
Z_{s_1,s_2, \ldots s_k}=\prod_{1 \le i <j \le k} Y_{s_i, s_j}
$$
are associated. Here the $s_i$ are pairwise different and $\{s_1,s_2, \ldots s_k\} \subseteq \{1, \ldots, n\}$. The $Z_{s_1,s_2, \ldots s_k}$ indicate whether the realized graph contains a $K_k$ at the vertices $s_1, \ldots, s_k$. We will be interested in the number of $K_k$'s in a realization of $G(n,p)$, thus in the sum of the $Z_{s_1,s_2, \ldots s_k}$, i.e. $Z=\sum_{1 \le s_1 < \ldots < s_k \le n} Z_{s_1,\ldots, s_k}$ .
\end{example}

\begin{example}[U-statistics]\label{ustat}
\normalfont
Let $Y_1, Y_2, \ldots Y_{n}$ be i.i.d. random variables with $\P(Y_i=1) =p =1-\P(Y_i=0)$ and for $k \in \N$ and $1 \le i_1, \ldots, i_k \le n$ pairwise different let
$$
Z_{i_1,\ldots, i_k}  = \prod_{j=1}^{k} Y_{i_j}
$$
Then again the $Z_{i_1,\ldots, i_k}$ are associated. We will be interested in the U-statistics $Z=\sum_{1 \le i_1 < \ldots < i_k \le n} Z_{i_1,\ldots, i_k}$.
Note that the corresponding quantities in Examples \ref{runs} and \ref{counts} are structurally similar. However, there $Z$ are so-called incomplete U-statistics, i.e. not all possible random variables $Z_{i_1,\ldots, i_k}$ are actually considered (in Example \ref{counts}, e.g., one requires that the vertices form a clique, rather than just multiplying any collection of $k$ edges). This leads to ''more independence'' among the summands. We will see in Section 3 that the bound we give works particularly well for random variables with many but weak correlations.
\end{example}

\begin{example}[Random hypergraphs]\label{group}
\normalfont
The following example has several motivations. On the one hand, it is the most generalization of the famous random graph models $G(N,p)$ and $G(N,M)$ invented by Erd\"os and R\'enyi (\cite{ER59}). Here edges are realized between $N$ points i.i.d. with probability $p$ ($G(N,p)$), or we take one of the graphs with exactly $M$ edges at random with equal probability ($G(N,M)$). These models are readily generalized to hypergraphs in the following way. Given a set of hyperedges, e.g. all possible complete graphs with $k$ vertices that can be constructed on the vertices $\{1, \ldots, N\}$ we realize each of them independently at random with probability $p$ or we realize a hypergraph with exactly $M$ such hyperedges at random with equal probability. Such random hypergraphs have been studied e.g. in
\cite{C-OMS07}, \cite{BC-OK10}, \cite{BC-OK14}, or \cite{KL02}.

On the other hand,
 the example can be considered as a mean-field version of the very recent topic of loop-percolation (see e.g. \cite{romik}): We cover a graph with a random ensemble of subgraphs and ask for the properties of the thus emerging graph. Finally, our example is also motivated by the analysis of a certain neural network (see \cite{gripona}, \cite{griponb}). However, it may  be  regarded as an interesting combinatorial game with a certain similarity to group testing. Take the complete graph $K_N$ and in each of a finite number of steps delete the edges of a randomly chosen complete subgraph $K_k$ ($k$ fixed), if they are not already deleted. Let $Y_{i,j}$ denote the indicator of the event, that edge $\{i,j\}$ has been deleted after $n$ steps. Then the $(Y_{i,j})$ are associated (and so are the $(Z_{i,j})=(1-Y_{i,j})$). We will be interested in the probability that the random $K_k$'s cover all of $K_N$, hence that the random hypergraph we realize is connected.
\end{example}
Other examples for association have been found it statistical mechanics, e.g. in models obeying the FKG inequalities (\cite{newman_fkg}) or recently in the fuzzy fractional Potts model \cite{Kahn_weininger} or in the parabolic Anderson model \cite{karasik}.

Due to their frequent occurrence, deviation inequalities for positively associated random variables have been investigated in a number of recent articles. Already Newman and Wright prove an invariance principle \cite{newmanwright1} and martingale inequalities \cite{newmanwright2}. Cox and Grimmett show a Central Limit Theorem \cite{CoxGrimmett}. Boutsikas and Koutras present a simple
upper bound for the distance between the distribution of the sum of $n$ associated random variables $X_i$  and
a sum of $n$ independent random variables with the same marginals as $X_i$ \cite{BoutsikasKoutras}. Exponential inequalities (also partially together with a resulting Strong Law of Large Numbers) were derived by Oliviera \cite{Oliveira}, \cite{Sung}, \cite{Xing_Yang2}, and Yang and Chen \cite{YangChen}. Finally a Law of the Iterated Logarithm was proved by Xing and Yang \cite{Xing_Yang}.
A very readable survey over these and more results can be found in \cite{Oliveira2}.


The purpose of the present note is to give a new exponential bound for the probability that a sum of positively associated indicator variables is particularly small or even 0. These bounds will also allow for a comparison with the case of sums of independent indicators with the same distribution. Such a comparison is of interest, since often when positively associated random variables are applied in the literature, it is tacitly assumed that the probabilities of their sum being 0 is well approximated by the independent case (see e.g \cite{gripona}). Moreover, by positive association, we have that
$$
\P(\sum_i X_i =0) \ge \prod_i \P(X_i=0)
$$
(with the sum and the product being taken over the same set of indices), anyway.
Thus we always have an obvious bound in one direction by the independent case.
One way to obtain the comparison between the dependent and the independent situation is Janson's inequality (\cite{Janson1}, \cite{Janson2}). This can be stated as follows:
In the above setting, let $I$ be index set of the associated variables $(X_i)_{i \in I}$
\begin{eqnarray*}
X= \sum_{i \in I} X_i, \quad &&\quad \lambda= \E X\\
\Delta= \frac 12 \sum_{i \sim j,\atop i \neq j} \E(X_i X_j) && \quad \overline \Delta = \lambda + 2\Delta,
\end{eqnarray*}
where we write $i \sim j$ if and only if $\mathrm{Cov}(X_i, X_j) \neq 0$.
Then the following bounds hold.

\begin{theorem} \label{Jineq} (\cite{Janson1}, \cite{Janson2}, also see \cite{suen})
We have
\begin{equation}\label{one}
\P(X=0) \le e^{-\lambda + \Delta}
\end{equation}
and
\begin{equation}
\P(X=0) \le e^{-\frac{\lambda}{\overline{\Delta}^2}}.
\end{equation}
Moreover,
\begin{equation}\label{three}
\P(X=0) \le \exp\left(\frac{\Delta}{1-\max_{i\in I } E X_i}\right) \prod_{i\in I } (1-\E X_i).
\end{equation}
\end{theorem}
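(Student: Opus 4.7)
The plan is to expand $\P(X=0)$ as a telescoping product of conditional probabilities along an arbitrary ordering of $I$ and to lower bound each factor via positive association. After fixing such an ordering, write
\[
\P(X=0) = \prod_{i \in I}(1-r_i), \qquad r_i := \P(X_i=1 \mid X_j=0 \text{ for all } j<i),
\]
so that $1-r_i \le e^{-r_i}$ yields $\P(X=0) \le \exp(-\sum_i r_i)$. The core of the argument will be to show
\[
r_i \ge p_i - \sum_{j \in D_i}\E(X_iX_j), \qquad D_i := \{j<i : j \sim i\},
\]
which, summed over $i$ (the factor $\tfrac{1}{2}$ in $\Delta$ absorbing the double-counting), gives $\sum_i r_i \ge \lambda - \Delta$ and hence \eqref{one}.

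To obtain this lower bound on $r_i$, I would split the predecessors into $D_i$ and $E_i := \{j<i : j \not\sim i\}$, and set $A_i := \bigcap_{j \in D_i}\{X_j=0\}$, $C_i := \bigcap_{j \in E_i}\{X_j=0\}$. Writing $r_i = \P(X_i=1, A_i \mid C_i)/\P(A_i \mid C_i) \ge \P(X_i=1, A_i \mid C_i)$ and applying a union bound to $\{X_i=1\}\cap A_i^c$ would produce
\[
r_i \ge \P(X_i=1 \mid C_i) - \sum_{j \in D_i}\P(X_i=1, X_j=1 \mid C_i).
\]
Two ingredients from positive association then close the step. First, since $\mathrm{Cov}(X_i,X_k)=0$ for $k \in E_i$ and uncorrelated Bernoullis are independent, the classical fact that positively associated random variables with diagonal covariance are mutually independent \cite{Esary/Proschan/Malkup:1967} forces $\P(X_i=1 \mid C_i) = p_i$. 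Second, as $\{X_i=1,X_j=1\}$ is increasing and $C_i$ is decreasing, the FKG/Harris inequality yields $\P(X_i=1, X_j=1 \mid C_i) \le \E(X_iX_j)$. Together these give $r_i \ge p_i - \sum_{j \in D_i}\E(X_iX_j)$, as required.

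For \eqref{three} (Suen's form), I would replace the crude estimate $1-r_i \le e^{-r_i}$ by the sharper Taylor bound $\log(1-r_i) \le \log(1-p_i) + (p_i-r_i)/(1-\max_k \E X_k)$, valid because FKG also yields $r_i \le p_i$; summing over $i$ and using $\sum_i(p_i-r_i) \le \Delta$ then produces $\log \P(X=0) \le \sum_i \log(1-p_i) + \Delta/(1-\max_k \E X_k)$. For the middle inequality, a Chernoff-type optimization of $\P(X=0) \le e^{t\lambda}\,\E\,e^{-tX}$, in which the moment generating function of $-X$ is controlled in terms of $\overline\Delta = \lambda+2\Delta$, followed by optimization in $t>0$, should deliver the stated ratio.

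The principal obstacle will be the decorrelation step $\P(X_i=1 \mid C_i) = p_i$: lifting pairwise zero covariance to joint independence of $X_i$ from the vector $(X_k)_{k \in E_i}$ is automatic in the classical Janson setup, where the $X_i$ are monotone functions of disjoint blocks of underlying independent Bernoullis, but in the stated associated generality it relies on the nontrivial independence criterion of \cite{Esary/Proschan/Malkup:1967}. One must also carefully track the symmetric double-counting in the definition of $\Delta$ across the telescoping product, so that the sum $\sum_i\sum_{j \in D_i}\E(X_iX_j)$ is correctly identified with $\Delta$ rather than $2\Delta$.
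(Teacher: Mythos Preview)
The paper does not prove Theorem~\ref{Jineq} at all: it is quoted from the literature (Janson \cite{Janson1,Janson2}, Suen \cite{suen}, Boppona--Spencer \cite{Boppona_Spencer}) purely as a benchmark against which the authors compare their own Theorem~\ref{ours}. The only argument the paper supplies is the proof of Theorem~\ref{ours} in Section~2, based on the moment-generating-function bound of Dewan--Rao. So there is nothing in the paper to compare your proposal to.

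That said, your sketch is essentially the standard telescoping/conditional-probability proof of Janson's inequality, and the outline for \eqref{one} and \eqref{three} is correct. A few comments. Your identification of the ``principal obstacle'' is exactly right: in Janson's original setting the $X_i$ are increasing functions of disjoint blocks of independent bits, so $\P(X_i=1\mid C_i)=p_i$ is immediate, whereas in the general associated setting stated here one really does need the fact that associated variables with vanishing pairwise covariances are independent. Your appeal to this result is appropriate, and for Bernoulli coordinates it can be checked directly (e.g.\ $\mathrm{Cov}(X_i,X_jX_k)\ge 0$ and $\mathrm{Cov}(X_i,X_j+X_k-X_jX_k)\ge 0$ together force $\mathrm{Cov}(X_i,X_jX_k)=0$, and so on). Your bookkeeping for $\Delta$ is also fine with the paper's convention $\Delta=\tfrac12\sum_{i\sim j,\,i\neq j}\E(X_iX_j)$ over ordered pairs: the telescoping sum $\sum_i\sum_{j\in D_i}\E(X_iX_j)$ counts each unordered pair once and thus equals $\Delta$. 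For \eqref{three}, your concavity step $\log(1-r_i)\le\log(1-p_i)+(p_i-r_i)/(1-p_i)$ combined with $r_i\le p_i$ (from FKG) and $p_i-r_i\le\sum_{j\in D_i}\E(X_iX_j)$ is exactly how the Boppona--Spencer refinement goes. The only place your sketch is thin is the middle inequality: ``a Chernoff-type optimization \dots\ should deliver the stated ratio'' is not yet a proof, and in fact the exponent $-\lambda/\overline{\Delta}^2$ as printed is unusual (the standard form is $-\lambda^2/\overline{\Delta}$ or a constant multiple thereof), so you would want to check the intended statement before trying to supply details.
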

For the purpose of the present note, we will be most of all interested in comparing our results with \eqref{three}, which can be derived from \eqref{one}, but was first proved by Boppona and Spencer \cite{Boppona_Spencer}.

Another way to compare the dependent and the independent cases is the following result, by Boutsikas and Koutras.

\begin{theorem} \label{Boutsikas} (\cite{BoutsikasKoutras})
We have
\begin{equation}\label{four}
\P[X=0] \le \prod_{i=1}^n \P[X_i=0] + \sum_{i<j}\mathrm{Cov}(X_i,X_j).
\end{equation}
\end{theorem}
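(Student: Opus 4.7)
The plan is to prove the inequality by induction on $n:=|I|$, identifying $I$ with $\{1,\dots,n\}$. After rewriting $\P(X=0)=\E\bigl[\prod_{i=1}^n(1-X_i)\bigr]$ and $\prod_i\P(X_i=0)=\prod_i(1-p_i)$, the base case $n=1$ is trivial since both sides equal $1-p_1$. For the inductive step I peel off $X_n$ via the telescoping identity
$$\E\bigl[\prod_{i=1}^n(1-X_i)\bigr]-\prod_{i=1}^n(1-p_i)=\mathrm{Cov}(1-X_n,W)+(1-p_n)\Bigl(\E[W]-\prod_{i=1}^{n-1}(1-p_i)\Bigr),$$
where $W:=\prod_{i=1}^{n-1}(1-X_i)$. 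The subfamily $(X_1,\dots,X_{n-1})$ is still positively associated (by property (P4) of \cite{Esary/Proschan/Malkup:1967} applied to coordinate projections), so the induction hypothesis bounds the bracketed difference by $\sum_{1\le i<j\le n-1}\mathrm{Cov}(X_i,X_j)$, and the prefactor $1-p_n\in[0,1]$ can be dropped.

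It then suffices to establish the key bound
$$\mathrm{Cov}(1-X_n,W)\le\sum_{i=1}^{n-1}\mathrm{Cov}(X_n,X_i).$$
I would rewrite $\mathrm{Cov}(1-X_n,W)=\mathrm{Cov}(X_n,1-W)$ and exploit the indicator structure: since $(X_i)_{i<n}\in\{0,1\}^{n-1}$, one has $1-W=\mathbf{1}\{\sum_{i<n}X_i\ge 1\}$, so
$$V:=\sum_{i=1}^{n-1}X_i-(1-W)=\max\Bigl(0,\sum_{i=1}^{n-1}X_i-1\Bigr)$$
is nonnegative and coordinatewise nondecreasing in $(X_1,\dots,X_{n-1})$, as a quick case check (flipping any $X_j$ from $0$ to $1$) confirms. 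Positive association of $(X_1,\dots,X_n)$ then gives $\mathrm{Cov}(X_n,V)\ge 0$, which is precisely the desired inequality after rearrangement. Summing the two contributions yields $\sum_{1\le i<j\le n}\mathrm{Cov}(X_i,X_j)$, closing the induction.

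The main obstacle is identifying the monotone combination $V$. The pointwise estimate $1-W\le\sum_{i<n}X_i$ is immediate, but what makes the argument work is the sharper observation that the \emph{difference} is coordinatewise monotone, so that association can be invoked. This is precisely where the $\{0,1\}$-valuedness of the indicators is used, rather than mere boundedness in $[0,1]$; the same bound would not go through in this simple form for general $[0,1]$-valued associated variables.
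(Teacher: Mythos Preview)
The paper does not actually prove Theorem~\ref{Boutsikas}; it is quoted from \cite{BoutsikasKoutras} as a known result for later comparison with Theorem~\ref{ours}. Your proposal therefore cannot be compared to a proof in the paper, but it stands on its own and is correct.

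The induction and the telescoping identity are fine. One small point worth making explicit: when you ``drop the prefactor $1-p_n$'' you are implicitly using that the bracketed difference $\E[W]-\prod_{i<n}(1-p_i)$ is nonnegative; this is true by association (indeed the paper itself records $\P(\sum_i X_i=0)\ge\prod_i\P(X_i=0)$ in the introduction), but it should be stated, since multiplying a negative quantity by $1-p_n\le 1$ would go the wrong way. The key step---observing that $V=\max\bigl(0,\sum_{i<n}X_i-1\bigr)$ is coordinatewise nondecreasing so that $\mathrm{Cov}(X_n,V)\ge 0$---is clean and is exactly where the indicator hypothesis enters. Your closing remark about $\{0,1\}$-valuedness versus $[0,1]$-valuedness is a nice observation.
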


Our main theorem also gives an upper bound for the probability that the sum of positively or negatively associated indicator random variables is zero. It can be stated as follows.

\begin{theorem}\label{ours}
Let the $X_i$ be positively associated indicators.
With the above notation we have for all $t>0$
\begin{equation}\label{first}
\P(X=0) \le e^{-t |I|}\left(\prod_{i \in I}\E[ e^{t(1-X_i)}] +t^2 e^{t |I|} \sum_{i<j}\mathrm{Cov}(X_i, X_j)\right)
\end{equation}
If the $(X_i)$ are identically distributed with $\P(X_i=1)=p=1-\P(X_i=0)$ this boils down to
\begin{eqnarray} \label{second}
\P(X=0) &\le& e^{-t |I|} (p+(1-p)e^t)^{|I|} +t^2 \sum_{i<j}\mathrm{Cov}(X_i, X_j)\nonumber\\
& =&(1-p)^{|I|}(1+e^{-t}\frac p {1-p})^{|I|}+t^2\sum_{i<j}\mathrm{Cov}(X_i, X_j)
\end{eqnarray}
\end{theorem}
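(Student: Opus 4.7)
The plan is to prove \eqref{first} via an exponential Chernoff estimate applied to the complementary sum, and then to control the resulting correlation correction by a polynomial expansion that exploits the $\{0,1\}$-structure of the variables.

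First, set $W_i := 1 - X_i$. The complementary indicators $(W_i)_{i \in I}$ are again positively associated, with $\mathrm{Cov}(W_i, W_j) = \mathrm{Cov}(X_i, X_j)$. Since $\{X = 0\}$ coincides with $\{\sum_{i \in I} W_i = |I|\}$, the exponential Markov inequality yields, for every $t > 0$,
$$
\P(X = 0) \le e^{-t|I|}\, \E\!\left[\prod_{i \in I} e^{tW_i}\right].
$$
The entire problem is therefore reduced to proving
$$
\E\!\left[\prod_i e^{tW_i}\right] \le \prod_i \E[e^{tW_i}] + t^2 e^{t|I|}\sum_{i<j}\mathrm{Cov}(W_i, W_j).
$$

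The key manoeuvre uses the identity $e^{tW_i} = 1 + (e^t-1)W_i$, valid because $W_i \in \{0,1\}$. Expanding both products and cancelling the $|S| \le 1$ contributions gives
$$
\E\!\left[\prod_i e^{tW_i}\right] - \prod_i \E[e^{tW_i}] = \sum_{\substack{S \subseteq I\\ |S| \ge 2}}(e^t - 1)^{|S|}\Bigl(\E\!\left[\prod_{i \in S} W_i\right] - \prod_{i \in S} \E[W_i]\Bigr).
$$
The central combinatorial claim is then that, for every $|S| \ge 2$,
$$
0 \;\le\; \E\!\left[\prod_{i \in S} W_i\right] - \prod_{i \in S} \E[W_i] \;\le\; \sum_{\{i,j\} \subseteq S}\mathrm{Cov}(W_i, W_j),
$$
which I would prove by induction on $|S|$. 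The key inductive ingredient is the observation that on $\{0,1\}^{|S'|}$ the function $(w_i) \mapsto \sum_{i \in S'} w_i - \prod_{i \in S'} w_i$ is coordinatewise non-decreasing; combined with positive association this yields the one-step bound $\mathrm{Cov}\bigl(\prod_{i \in S'} W_i,\, W_k\bigr) \le \sum_{i \in S'}\mathrm{Cov}(W_i, W_k)$, and a standard mean-telescoping closes the induction.

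Inserting the combinatorial claim, swapping the order of summation, and using $\sum_{T \subseteq I \setminus \{i,j\}}(e^t-1)^{|T|} = e^{t(|I|-2)}$ produces the coefficient $(e^t - 1)^2 e^{t(|I|-2)}$ in front of each $\mathrm{Cov}(W_i, W_j)$; the elementary inequality $1 - e^{-t} \le t$ bounds this coefficient by $t^2 e^{t|I|}$, giving \eqref{first}. Statement \eqref{second} follows by specialization via $\E[e^{t(1-X_i)}] = p + (1-p)e^t$ and algebraic rearrangement. The main obstacle is the combinatorial inductive bound on $\E[\prod_{i \in S} W_i] - \prod_{i \in S} \E[W_i]$; once that is available, the factor $t^2 e^{t|I|}$ emerges directly from the inner-sum evaluation and the rest is bookkeeping.
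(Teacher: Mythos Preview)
Your proof is correct and takes a genuinely different route from the paper's. Both arguments begin identically---pass to $W_i=1-X_i$, apply the exponential Markov inequality, and reduce to bounding $\E\bigl[\prod_i e^{tW_i}\bigr]-\prod_i\E[e^{tW_i}]$---but diverge at the key step. The paper invokes a general moment-generating-function lemma (Dewan--Rao, ultimately resting on Newman's covariance inequality $|\mathrm{Cov}(f(X),g(Y))|\le\|f'\|\,\|g'\|\,\mathrm{Cov}(X,Y)$) and proceeds by induction on $|I|$, peeling off one variable at a time. You instead exploit the indicator structure through the exact identity $e^{tW_i}=1+(e^t-1)W_i$, expand both products over subsets $S\subseteq I$, and control each centred moment $\E\bigl[\prod_{i\in S}W_i\bigr]-\prod_{i\in S}\E[W_i]$ by an inductive combinatorial bound whose driver is the monotonicity of $\sum w_i-\prod w_i$ on $\{0,1\}^{|S'|}$.

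Your argument is more elementary in that it avoids the analytic Newman lemma entirely and stays purely within the association framework; it also yields the slightly sharper intermediate constant $(1-e^{-t})^2 e^{t|I|}$ before relaxing to $t^2 e^{t|I|}$. The price is that your expansion is specific to $\{0,1\}$-valued variables, whereas the paper's lemma applies verbatim to any bounded associated family---so the paper's route generalises while yours is tailored. Both are complete proofs of the stated theorem.
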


\begin{rems}
\normalfont
\begin{enumerate}
\item
It is obvious that Theorem \ref{Jineq}, Theorem \ref{Boutsikas}  as well as our Theorem \ref{ours} are tailor-made for the situation of weak correlations, where one is close to the independent situation. If correlations are getting large, then they give useless bounds:  The probabilities are bounded from above by numbers larger than 1.
An important difference between \eqref{four} and \eqref{first} is that we have a factor $t^2$ in front of the sum of the covariances. As we will see in Section 3, choosing $t$ small, this factor can be decisive when the covariances are too large.

\item
A quick glance at \eqref{three} and \eqref{second} already reveals the major difference between the two bounds: While \eqref{three} is purely multiplicative, our bound \eqref{second} also has an additive component. This has a consequence for the applicability of our bounds. Consider e.g. the situation for \eqref{second}:  In case that $\P(X=0)$  is comparable to the independent approximation $(1-p)^{|I|}$ and $p$ is much larger than $1/|I|$, both probabilities will typically go to 0 and they will do so at an exponential rate. To have the additive term $t^2\sum_{i<j}\mathrm{Cov}(X_i, X_j)$ in \eqref{second} smaller than the probability we want to approximate, we therefore need to choose $t$ extremely small, which in turn means that the multiplicative correction in $(1+e^{-t}\frac p {1-p})^{|I|}$ is of order $e^{p\,|I|}$ which typically is only effective, if $p$ is of order $1/|I|$. Hence in such cases, we expect that \eqref{three} will yield results that are more useful than \eqref{second}.
\end{enumerate}

\end{rems}

We will prove Theorem \ref{ours} in the following section. Section 3 will be devoted to comparing Theorems \ref{Jineq} and \ref{ours} in the examples mentioned above. We will see that our bound sometimes is better than Janson's inequality and also better than the bound given in Theorem \ref{Boutsikas}.

\section{Proof of Theorem \ref{ours}}
As usual the central ingredient of the proof of an exponential inequality as in Theorem \ref{ours} is an estimate of the moment generating function.
Such bound can be found in \cite{DewanRao99}, Lemma 3.1. 
 For the sake of completeness (and also as the original proof is somewhat short), we reprove the statement here.

\begin{lemma}(see \cite{DewanRao99}, Lemma 3.1)
Let $X_1, X_2, \ldots, X_n$ be positively associated random variables, such that $X_i \le \kappa$ for all $i$ and some $\kappa>0$.
Then for any $t>0$
$$
\left| \E \left[e^{t \sum_{i=1}^n X_i}\right]- \prod_{i=1}^n \E \left[ e^{tX_i}\right]\right| \le t^2 e^{n t\kappa} \sum_{1 \le i < j \le n} \mathrm{Cov}(X_i, X_j).
$$
\end{lemma}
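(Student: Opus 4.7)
The plan is to proceed by induction on $n$, reducing the inductive step to one covariance estimate. Writing $T_k = X_1 + \cdots + X_k$, the case $n=1$ is trivial. For the inductive step I would split
\begin{equation*}
\E\bigl[e^{tT_n}\bigr] - \prod_{i=1}^n \E\bigl[e^{tX_i}\bigr] = \mathrm{Cov}\bigl(e^{tT_{n-1}}, e^{tX_n}\bigr) + \E\bigl[e^{tX_n}\bigr]\Bigl(\E\bigl[e^{tT_{n-1}}\bigr] - \prod_{i=1}^{n-1}\E\bigl[e^{tX_i}\bigr]\Bigr),
\end{equation*}
and observe that both summands on the right are non-negative: $T_{n-1}$ and $X_n$ are coordinatewise increasing functions of $X_1,\ldots,X_n$, hence associated by (P4) of \cite{Esary/Proschan/Malkup:1967}, so the first term is $\ge 0$; the inductive hypothesis (read as a one-sided bound) handles the second. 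Consequently the absolute value on the left-hand side is superfluous throughout the induction.

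For the inductive summand, using $\E e^{tX_n} \le e^{t\kappa}$ and the inductive bound, one gets
\begin{equation*}
\E\bigl[e^{tX_n}\bigr]\Bigl(\E\bigl[e^{tT_{n-1}}\bigr] - \prod_{i=1}^{n-1}\E\bigl[e^{tX_i}\bigr]\Bigr) \le t^2 e^{nt\kappa}\sum_{1 \le i < j \le n-1}\mathrm{Cov}(X_i, X_j).
\end{equation*}
For the covariance term---the heart of the proof---I would use the representation
$$e^{tz} - e^{ta} = t\int_a^M e^{ts}\,\ind\{z > s\}\,ds \qquad (a \le z \le M),$$
applied to $z = T_{n-1}$ (with $M = (n-1)\kappa$) and to $z = X_n$ (with $M = \kappa$). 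Multiplying out inside the covariance yields
$$\mathrm{Cov}\bigl(e^{tT_{n-1}}, e^{tX_n}\bigr) = t^2\iint e^{t(s+u)}\,\mathrm{Cov}\bigl(\ind\{T_{n-1} > s\}, \ind\{X_n > u\}\bigr)\,ds\,du,$$
and each factor $\mathrm{Cov}(\ind\{T_{n-1} > s\},\ind\{X_n > u\})$ is non-negative, again by association. Hence $e^{t(s+u)}$ may be replaced by its maximum $e^{nt\kappa}$ and pulled out, after which Hoeffding's covariance identity $\mathrm{Cov}(Y,Z) = \iint \mathrm{Cov}(\ind\{Y>s\},\ind\{Z>u\})\,ds\,du$ collapses the double integral to $\mathrm{Cov}(T_{n-1}, X_n) = \sum_{i=1}^{n-1}\mathrm{Cov}(X_i, X_n)$. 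Adding the two bounds and noting that the index sets $\{(i,j):1\le i<j\le n-1\}$ and $\{(i,n):i<n\}$ together give $\{(i,j):1\le i<j\le n\}$ closes the induction.

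The main obstacle is this covariance step: one needs the right device to convert $\mathrm{Cov}(e^{tT_{n-1}}, e^{tX_n})$ into a bound involving only the pairwise covariances of the $X_i$. The integral representation combined with Hoeffding's identity is the cleanest route and exploits positive association optimally; an alternative would be to invoke Newman's general covariance inequality, but that would itself need to be cited or proved. Should some $X_i$ take negative values, simply choosing the constant $a$ in the integral representation to be a common lower bound preserves the argument.
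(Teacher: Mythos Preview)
Your proof is correct and follows the same inductive architecture as the paper: identical splitting of $\E[e^{tT_n}]-\prod_i\E[e^{tX_i}]$ into $\mathrm{Cov}(e^{tT_{n-1}},e^{tX_n})$ plus the inductive remainder $\E[e^{tX_n}]\bigl(\E[e^{tT_{n-1}}]-\prod_{i<n}\E[e^{tX_i}]\bigr)$, and the same use of $\E[e^{tX_n}]\le e^{t\kappa}$ to upgrade the factor $e^{(n-1)t\kappa}$ from the induction hypothesis to $e^{nt\kappa}$. The only point of difference is the handling of the covariance term. The paper directly invokes Newman's covariance inequality (Lemma~3 in \cite{newman_fkg}), namely $|\mathrm{Cov}(f(X),g(Y))|\le\|f'\|_\infty\|g'\|_\infty\,\mathrm{Cov}(X,Y)$, with $f=g=e^{t\,\cdot}$, obtaining $t^2e^{nt\kappa}\,\mathrm{Cov}(T_{n-1},X_n)$ in one stroke. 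You instead re-derive this bound for the exponential case via the integral representation combined with Hoeffding's identity --- which is in fact how Newman proves his general lemma. So your argument is more self-contained but not substantively different; indeed the ``alternative'' you mention in your last paragraph (citing Newman's inequality) is precisely the route the paper takes.
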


\begin{proof}
The key tool is to use Lemma 3 in \cite{newman_fkg}. This Lemma implies that for associated random variables $X$ and $Y$ with finite variance, and (real or complex valued) functions $f$ and $g$ with bounded derivatives one has
$$
|\mathrm{Cov}(f(X),g(Y)| \le ||f'|| \, ||g'|| \, \mathrm{Cov}(X,Y)
$$
where $||\cdot||$ is the sup-norm. Applying this we obtain due to the boundedness of the the $X_i$
$$
|\mathrm{Cov}(e^{tX_1},e^{tX_2})| \le t^2 e^{2 t \kappa} \, \mathrm{Cov}(X_1,X_2)
$$
which is precisely the assertion for $n=2$. The result now follows by induction:
According to our earlier remark, if $X_1, X_2, \ldots, X_n$ are associated random variables, then so are $X_1 + X_2 + \ldots, X_{n-1}$ and $X_n$. Thus , if the assertion has already been proven
for $n-1$ we obtain
\begin{eqnarray}
&& \left| \E \left[e^{t \sum_{i=1}^n X_i}\right]- \prod_{i=1}^n \E \left[ e^{tX_i}\right]\right| \nonumber \\&\le&
\left| \E \left[e^{t \sum_{i=1}^n X_i}\right]- \E \left[e^{t \sum_{i=1}^{n-1} X_i} \right]\E \left[e^{tX_n} \right]\right| +
\left|
\E \left[e^{t \sum_{i=1}^{n-1} X_i} \right]\E \left[e^{tX_n} \right]-  \prod_{i=1}^n \E \left[ e^{tX_i}\right]\right| \nonumber \\
&\le& t^2 e^{n t\kappa}\sum_{1 \le i  \le n-1} \mathrm{Cov}(X_i, X_n) + t^2 e^{n t\kappa} \sum_{1 \le i < j \le n-1} \mathrm{Cov}(X_i, X_j)\label{induction}\\
&\le & t^2 e^{n t\kappa} \sum_{1 \le i < j \le n} \mathrm{Cov}(X_i, X_j),
\end{eqnarray}
where in \eqref{induction} the first summand is bounded by another application of Lemma 3 in \cite{newman_fkg} inequality, while the bound on the second summand follows from the induction hypothesis.
\end{proof}

From this bound the proof of the main theorem follows quickly:

\begin{proof}[Proof of Theorem \ref{ours}]
With the notation of the theorem let $Y_i= 1-X_i$. Note that together with the $(X_i)$ also the $(Y_i)$ are associated.
Using an exponential Markov inequality and the previous lemma we obtain for all $t>0$
\begin{eqnarray*}
\P(X=0) &=& \P(\sum_{i \in I} X_i =0) = \P(\sum_{i \in I} Y_i =|I|)= \P(\sum_{i \in I} Y_i \ge|I|)\\
&\le & e^{-t |I|} \E[e^{t\sum_{i \in I} Y_i}]\\
 &\le & e^{-t |I|} \left(\prod_{i \in I}\E[ e^{tY_i}] + t^2 e^{t |I|} \sum_{i < j } \mathrm{Cov}(Y_i, Y_j)\right)\\
 &=& e^{-t |I|} \left(\prod_{i \in I} \E[e^{t(1-X_i)}] + t^2 e^{t |I|} \sum_{i < j} \mathrm{Cov}(X_i, X_j)\right)\\
\end{eqnarray*}
which is \eqref{first}.

Computing the expectation in the product for identically distributed random indicators with success probability $p$ we arrive at
\begin{eqnarray*}
\P(X=0) &\le& e^{-t |I|} \left(\prod_{i \in I} (p+(1-p)e^t) + t^2 e^{t |I|} \sum_{i < j} \mathrm{Cov}(X_i, X_j)\right)\\
        &=&  e^{-t |I|} (p+(1-p)e^t)^{|I|} +  t^2 \sum_{i < j} \mathrm{Cov}(X_i, X_j) \\
        &=& (1-p)^{|I|} (1+ \frac{p}{1-p} e^{-t})^{|I|}+  t^2 \sum_{i < j} \mathrm{Cov}(X_i, X_j)
\end{eqnarray*}
which is \eqref{second}.
\end{proof}

\section{Examples}
In this section we will discuss the quality of our estimate in the examples given in Section 1. The aim is to see, if and when Theorem \ref{ours} is better than Theorem \ref{Jineq}, especially, when \eqref{second} is more powerful than \eqref{three}, and to gain a general understanding, in which situations it is advisable to apply Theorem \ref{ours}.

We start with
\begin{example}[k-Runs, Example \ref{runs} continued]
\normalfont
To keep the situation symmetric and simplify computations, we slightly change the setup. So let $Y_1, Y_2, \ldots Y_{n}$ be i.i.d. random variables with $\P(Y_i=1) =p =1-\P(Y_i=0)$ and for $k \in \N$ let
$$
Z_i = \prod_{j=i}^{i+k-1} Y_i
$$
(where $i+k-1$ now is to be taken modulo $n$).
Moreover let $Z=\sum_{i=1}^n Z_i$. Computing the central quantity in Janson's inequality we obtain
\begin{eqnarray*}
\Delta &= & 
\frac n 2  \sum_{1 \sim j, \atop 1 \neq j} \E Z_1 Z_j = \frac n 2 \sum_{j=1}^{k-1} p^{k+j}\\
&=& \frac n2 p^{k+1} \frac {1-p^{k-1}}{1-p}
\end{eqnarray*}
Thus \eqref{three} yields
\begin{equation}\label{Jansonruns}
\P(Z=0) \le (1-p^k)^n \exp\left(\frac {n p^{k+1}( 1-p^{k-1})}{2 (1-p)(1-p^k)}\right).
\end{equation}
(Note that more generally the total variation distance between the distribution of $Z$ and a Poisson distribution was bounded in \cite{Barbour_Holst_Janson}, Theorem 8.F.)
This bound becomes effective, i.e. we have that $\P(Z=0)$ is well approximated by $(1-p^k)^n$, if $p \ll n^{-1/k+1}$, i.e. if $np^{k+1} \to 0$.
On the other hand our Theorem \ref{ours} yields for any $t>0$
\begin{eqnarray*}
\P(Z=0) & \le & (1-p^k)^n (1+e^{-t}\frac{p^k}{1-p^k})^n+t^2\sum_{i<j}\mathrm{Cov}(Z_i, Z_j)\\
& \le & (1-p^k)^n (1+e^{-t}\frac{p^k}{1-p^k})^n+ t^2 \Delta
\end{eqnarray*}
Choosing $0<t \ll \Delta^{-1/2}$ the second summand on the right hand side converges to 0. Then the correction factor $(1+e^{-t}\frac{p^k}{1-p^k})^n$ for the first summand on the right hand side is bounded by $e^{n \frac{p^k}{1-p^k}}$ and particularly converges to 1, if $p \ll n^{-1/k}$, which improves the bound from Janson's inequality.
Note however that even for much larger $p$ an exponential bound on $\P(Z=0)$ holds true. As a matter of fact Barbour et al. in \cite{Barbour_Holst_Janson},
Theorem 8.G  give the bound
\begin{equation}\label{Janson_Poisson_runs}
|\P(Z=0)- \exp(-n (1-p)p^k)| \le (2k(1-p)+1)p^k
\end{equation}
\end{example}

The recipe seen in the previous example illustrates the general method. In \eqref{second} one always chooses $t$ (depending on $n$) so small that the second summand on the right vanishes, and then we obtain a bound of the form $(1-p)^n e^{n \frac{p}{1-p}}$, where $p$ is the success probability of the random variables in question. This works, if $p$ is so small, that $(1-p)^n$ is of constant order. However, even if both $(1-p)^n$ and $\P(Z=0)$ tend to 0, we may choose $t$ so small, that the second summand in \eqref{second} vanishes, but we may not be able to guarantee that then $p$ is so small, that $e^{n \frac{p}{1-p}}$ converges to 1.

\begin{example}[Subgraph-counts, Example \ref{counts} continued]\normalfont
In the situation of Example \ref{counts} consider the number of triangles in the random graph $G(n,p)$.
Thus we consider
$$
Z_{s_1,s_2,s_3}=Y_{s_1, s_2}Y_{s_2, s_3} Y_{s_3, s_1}
$$
and
$$Z= \sum_{1 \le s_1 < s_2 < s_3 \le n} Z_{s_1,s_2,s_3}. $$
We bound the probability that the random graph does not contain any triangle, hence that $Z=0$.
The quantity $\Delta$ in \eqref{three} can this time be computed as
$$
\Delta = \frac 12 \binom n 3 3n p^5 \sim \frac 14 n^4 p^5,
$$
as there are $\binom n 3$ triangles, each of which is correlated with $3n$ other triangles, and in this case $\E Z_{s_1,s_2,s_3} Z_{s_4,s_5,s_6} =p^5.$
Thus Janson's inequality gives
$$
\P(Z=0) \le (1-p^3)^{\binom n3} \exp\left(\frac {n^4p^5}{4(1-p^3)}\right).
$$
If $p \ll n^{-4/5}$, this bound is effective, since the factor is tending to 1.

This time our bound works well only in some range of the parameters and for slightly different reasons than in the previous example. From \eqref{second} we obtain for any $t>0$
$$
\P(Z=0) \le (1-p^3)^{\binom n3}\left(1+e^{-t}\frac{p^3}{1-p^3}\right)^{\binom n3} +t^2 n^4 p^5.
$$

In the regime, where $n^{-1} \ll p $ we see exactly the problem mentioned at the end of the introduction occurring: The probabilities to be approximated are of order $\exp(-p^3 n^3)$ (at least for $p$ going to 0 with $n$ to infinity) and thus converging to 0 exponentially fast. So even if $p \ll n^{-4/5}$, in which case the second term on the right goes to 0, we need to choose $t$ to go to 0 as well, and we cannot improve the first summand in our bound by choosing
$t$ large. If now $n^{-1} \ll p \ll n^{-4/5}$ we may obtain a bound of the form $(1-p^3)^{\binom n3}\left(1+\frac{p^3}{1-p^3}\right)^{\binom n3}$ by choosing $t$ extremely small. This is however useless, as the second factor on the right diverges.

So let us assume that $p=\lambda/n$. Choose $t=n^\alpha$ for $0<\alpha<\frac 12$. Then \eqref{second} becomes
\begin{eqnarray*}
\P(Z=0) & \le & (1-p^3)^{\binom n3}\left(1+e^{-n^\alpha}\frac{p^3}{1-p^3}\right)^{\binom n3} + \frac{ \lambda^5}{n^{1-2\alpha}}\\
&=& (1-p^3)^{\binom n3}\left(1+e^{-n^\alpha}\frac{p^3}{1-p^3}\right)^{\binom n3} + o(1).
\end{eqnarray*}
The multiplicative correction $\left(1+e^{-n^\alpha}\frac{p^3}{1-p^3}\right)^{\binom n3} \le \exp(\lambda^3 e^{-n^\alpha})$ converges to 1 much faster than $\exp(\frac 1n)$, the correction from
\eqref{three}.

\end{example}

\begin{example}[U-statistics, Example \ref{ustat} continued]
\normalfont
In the situation of Example \ref{ustat}, we define
 $$\displaystyle Z=\sum_{i=\{i_1,\ldots, i_k\}\subset \{1,\ldots,n\}} Z_i.$$
 We first compute the central quantity in \eqref{three}:
$$
\Delta = \frac 12 \binom n k \sum_{j=1}^{k-1} \binom{n-k}{j} p^{k+j} \sim C \frac 1 {k!} (np)^{k+1} \frac{1-(np)^{k-1}}{1-np},
$$ with $C\in ]0,1[$. Thus \eqref{three} yields
\begin{equation}\label{janson-ustat}
\P(Z=0) \le (1-p^k)^{\binom n k}\exp\left(C \frac{(np)^{k+1} (1-(np)^{k-1})}{k! (1-np)(1-p^k)}\right)
\end{equation}
for some constant $C$, and it is obvious that this bound is only effective, if $p=\frac{\lambda} n$, for some $\lambda >0$ or even $p= o(1/n)$. This is also the situation where Theorem \ref{ours} gives good bounds.
If $p = \lambda/n$, we choose $t \ll e^{-\lambda^k/2 k!}/\Delta$ to guarantee that the second summand in \eqref{second} is much smaller than the first. Then, for large $\lambda$, \eqref{second} basically boils down to
$$
\P(Z=0)\le  (1-p^k)^{\binom n k} \left(1+\frac {p^k}{1-p^k}\right)^{\binom nk}+o(1).
$$
This is still better than the bound \eqref{janson-ustat} obtained from \eqref{three}, since our multiplicative error factor behaves like $e^{\lambda^k/k!}$, while the one in \eqref{janson-ustat} is essentially $e^{\lambda^{2k}/k!}$.  However, if $\lambda$ is getting small, our bound gets even better (compared to \eqref{janson-ustat}), due to the additional factor $e^{-t}$ (with $t$ chosen as above) in
the multiplicative error term. This especially applies when $\lambda \to 0$ with $n$ going to infinity.
\end{example}

In the previous examples we always estimated the sum of the covariances occurring in \eqref{second} by the $\Delta$ from Janson's inequality: One may wonder whether we haven't been giving away too much by such a bound. The answer is no, since in these cases the covariances of the variables are dominated by the expectations of the product. However, this need not be the case in general and it will turn out in such situations our bound is significantly better than Janson's inequality.

\begin{example}[Random hypergraphs, Example \ref{group} continued]
\normalfont
Here we will see, that our inequality is able to outperform Janson's inequality in certain situations.

To this end, in the situation of Example \ref{group} let $\mathfrak{Y}$ be the event that all edges of $K_N$ are covered by one of the randomly chosen $K_k$'s. With the notation of Example \ref{group}
$$
\mathfrak{Y}= \left\{\sum_{1 \le i<j\le N} Y_{i,j}= \binom N2 \right\} = \left\{Z:= \sum_{1 \le i<j\le N} Z_{i,j}=0\right\}.
$$
We want to estimate the probability of $\mathfrak Y$ for large $n$ and fixed $k$. To this end assume we select $n$ $K_k$'s as subgraphs of $K_N$ i.i.d. at random. Then for any edge $(i,j)$
$$
\P(Z_{i,j}=1)  = \left(1-\frac{\binom{N-2}{k-2}}{\binom N k}\right)^n.
$$
Indeed, $Z_{i,j}=1$, if and only if $(i,j)$ is not contained in any of the $n$ independent $K_k$'s and
 this happens with probability  $\left(1-\frac{\binom{N-2}{k-2}}{\binom N k}\right)^n=:p(N,n,k)=:p$.
For $N$ large, and $k$ fixed,
$$
p \sim \left( 1-\frac{k(k-1)}{N^2}\right)^n.
$$
To compute $\Delta$ from Theorem \ref{Jineq} observe that indeed none of the $Z_{i,j}$ are uncorrelated, but that their correlation is extremely weak. As a matter of fact, there are two different types of correlations between
$Z_{i,j}$ and $Z_{i',j'}$ depending on whether the edges $(i,j)$ and $(i',j')$ share a vertex or not. Assume $(i,j)$ and $(i',j')$ share a vertex, e.g. $j=j'$. Then
with $Y_{i,j}^r$ denoting the indicator that $(i,j)$ is covered by the $r$'th independent copy of $K_k$
\begin{eqnarray*}
\P(Z_{i,j}Z_{i',j}=1) &=& \P(Y_{i,j}+Y_{i',j}=0)\\
&=& \P\left(\bigcap_{r=1}^n \{Y_{i,j}^r+Y_{i',j}^r=0\}\right)\\
&=& \left(\frac{\binom{N-3}{k}}{\binom N k }+3\frac{\binom{N-3}{k-1}}{\binom N k }+ \frac{\binom{N-3}{k-2}}{\binom N k }\right)^n,
\end{eqnarray*}
the different terms coming from the five possible cases:

\begin{itemize}
\item choose $k$ indices in $\{1,\ldots, N\}\setminus \{i,i',j\}$,
\item choose $i$ and then $k-1$ indices in $\{1,\ldots, N\}\setminus  \{i,i',j\}$,
\item choose $i'$ and then $k-1$ indices in $\{1,\ldots, N\}\setminus  \{i,i',j\}$,
\item choose $j$ and then $k-1$ indices in $\{1,\ldots, N\}\setminus  \{i,i',j\}$,
\item choose $i$ and $i'$ and then $k-2$ indices in $\{1,\ldots, N\}\setminus  \{i,i',j\}$.
\end{itemize}

In the same way, if $(i,j)$ and $(i',j')$ do not share a vertex, we have
\begin{eqnarray*}
\P(Z_{i,j}Z_{i',j'}=1) &=& \P\left(\bigcap_{r=1}^n \{Y_{i,j}^r+Y_{i',j'}^r=0\}\right)\\
&=& \left(\frac{\binom{N-4}{k}}{\binom N k }+4\frac{\binom{N-4}{k-1}}{\binom N k }+ 4\frac{\binom{N-4}{k-2}}{\binom N k }\right)^n
\end{eqnarray*}
Thus
\begin{eqnarray*}
\Delta &= &\frac 12\sum_{(i,j),(i',j'),\atop (i,j)\neq(i',j')} \E(Z_{i,j}Z_{i',j'})\\
&=& \binom N2 \left((N-2)  \left(\frac{\binom{N-3}{k}}{\binom N k }+3\frac{\binom{N-3}{k-1}}{\binom N k }+ \frac{\binom{N-3}{k-2}}{\binom N k }\right)^n \right. \\
&& \qquad \left.
 +\frac 12\binom{N-2}2   \left(\frac{\binom{N-4}{k}}{\binom N k }+4\frac{\binom{N-4}{k-1}}{\binom N k }+ 4\frac{\binom{N-4}{k-2}}{\binom N k }\right)^n\right)
\end{eqnarray*}
Now suppose $n=\lambda N^2$ for some $\lambda>0$ and that $N$ is large. Observe that for $n$ smaller than this, we cannot hope for any kind of Poisson approximation for the probability in question, since we simply do not have enough edges from the $K_k$'s to cover $K_N$. For $n$ as large as  $n=\lambda N^2$, or larger, the Poisson approximation becomes more likely, since then the probability not to cover an edge, becomes small.

We will now expand the exponentials in the computation of $\Delta$. Let us start with $k=3$, such that only the first of the two summands for $\Delta$ is present. We obtain:
\begin{eqnarray*}
\frac{\binom{N-3}{k}}{\binom N k }&=& 
(1- \frac kN)(1-\frac{k+1}N) (1-\frac{k+2}N)\frac N{N-1}\frac N{N-2}\\
&=&1- 3 \frac kN +\frac 1{N^2}(3k^2-3k)+ O(\frac 1{N^3})) \\
\end{eqnarray*}
and

\begin{eqnarray*}
\frac{\binom{N-3}{k-1}}{\binom N k }&=& 
\frac{(N-k)(N-k-1)k}{N(N-1)(N-2)}\\
&=&(1- \frac kN)(1-\frac{k+1}N) \frac{k}N(1+\frac 1N+ O(\frac 1{N^2}))(1+\frac 2N+ O(\frac 1{N^2}))\\
&=& \frac{k}N(1-2\frac {k-1}N+ O(\frac 1{N^2}))\\
\end{eqnarray*}

as well as

$$\frac{\binom{N-3}{k-2}}{\binom N k }= \frac{ (N-3)!}{N!} \frac{ (N-k)!}{(N-k-1)!}  \frac{ k!}{(k-2)!} = \frac{(N-k)k(k-1)}{N(N-1)(N-2)} = \frac{k(k-1)}{N^2}+ O(\frac 1{N^3}).$$

Thus
\begin{eqnarray*}
\P(Z_{i,j}Z_{i',j}=1)
&=& \left(\frac{\binom{N-3}{k}}{\binom N k }+3\frac{\binom{N-3}{k-1}}{\binom N k }+ \frac{\binom{N-3}{k-2}}{\binom N k }\right)^n\\
&=& (1- \frac 2{N^2} k(k-1) +O(\frac 1{N^3}))^n\\
\end{eqnarray*}

As for $n= \lambda N^2$ the probability $p$ is of constant order
we obtain from Janson's inequality \eqref{three} for $k=3$
\begin{eqnarray*}
\P(Z=0) &\le& (1-p)^{\binom N2}\exp\left(C N^3 (1- \frac 2{N^2} k(k-1) +O(\frac 1{N^3}))^n\right)\\
&\le &(1-p)^{\binom N2}\exp\left(C N^3 e^{-12\lambda}\right)
\end{eqnarray*}
for $N$ large enough. This is, of course, a useless bound, since the right hand side is converging to infinity.

On the other hand, for $\mathrm{Cov}(Z_{i,j}, Z_{i',j})$ we compute up to terms of order $n/N^4$
$$
\mathrm{Cov}(Z_{i,j}, Z_{i',j})=\frac{n(k^3 - 3k^2 + 2k)}{N^3} + O(\frac n {N^4}).
$$
Thus
$$
\sum_{(i,j) \sim (i',j')}\mathrm{Cov}(Z_{i,j}, Z_{i',j'})= C \lambda N^2.
$$

Obviously, the bound \eqref{four} from Theorem \ref{Boutsikas} by Boutsikas and Koutras is useless in this example, since the sum of the covariances diverges.
With the factor $t^2$ in front of the covariances, our bound \eqref{second} gives for any $t>0$.
$$
\P(Z=0) \le (1-p)^{\binom N2}(1+e^{-t}\frac p {1-p})^{\binom N2}+C t^2 \lambda N^2.
$$
If we now choose $t$ extremely small, e.g $t=e^{-N^3}$, it is evident that the second summand on the right is asymptotically negligible with respect to the first. This first summand is estimated as
$$
(1-p)^{\binom N2}(1+e^{-t}\frac p {1-p})^{\binom N2}\le (1-p)^{\binom N2}(1+\frac {e^{-6\lambda}} {1-e^{-6\lambda}})^{\binom N2}
$$
Thus \eqref{second} gives
\begin{equation*}
\P(Z=0)\le \left((1-p)(1+\frac {e^{-6\lambda}} {1-e^{-6\lambda}})\right)^{\binom N2}
\end{equation*}
In particular for $\lambda$ large, this is not only of the right order, but also very close to the lower bound $(1-p)^{\binom N2 }$.

For $k \ge 4$ one can compute that the covariances are now of order $1/N^2$ (which we spare ourselves as the computations are similar to ones above) , but as a matter of fact, a similar choice of $t$ leads to a similar result as in the case $k=3$.
\end{example}

To conclude, the above examples show that for positive correlations our method gives comparable results to that of Janson's inequality.
Even more is true: There are examples where our methods gives useful bounds, while \eqref{three} does not. These examples occur, if many of the random variables are correlated, but correlation is very weak. In this case $\Delta$ can tend to be so large, that \eqref{three} does not yield reasonable bounds. Moreover, when the sum of the covariances is too large, the last example illustrates also the situation where our bound gives an interesting approximation of the independent case, while the bound by Boutsikas and Koutras doesn't apply.

\bibliographystyle{abbrv}

\bibliography{LiteraturDatenbank}

\begin{thebibliography}{10}

\bibitem{gripona}
B.~K. Aliabadi, C.~Berrou, V.~Gripon, and X.~Jiang.
\newblock Learning sparse messages in networks of neural cliques.
\newblock {\em IEEE Transactions on Neural Networks and Learning Systems},
  August 2012.
\newblock To appear.

\bibitem{Barbour_Holst_Janson}
A.~D. Barbour, L.~Holst, and S.~Janson.
\newblock {\em Poisson approximation}, volume~2 of {\em Oxford Studies in
  Probability}.
\newblock The Clarendon Press, Oxford University Press, New York, 1992.
\newblock Oxford Science Publications.

\bibitem{BC-OK10}
M.~Behrisch, A.~Coja-Oghlan, and M.~Kang.
\newblock The order of the giant component of random hypergraphs.
\newblock {\em Random Structures Algorithms}, 36(2):149--184, 2010.

\bibitem{BC-OK14}
M.~Behrisch, A.~Coja-Oghlan, and M.~Kang.
\newblock Local limit theorems for the giant component of random hypergraphs.
\newblock {\em Combin. Probab. Comput.}, 23(3):331--366, 2014.

\bibitem{Boppona_Spencer}
R.~Boppona and J.~Spencer.
\newblock A useful elementary correlation inequality.
\newblock {\em J. Combin. Theory Ser. A}, 50(2):305--307, 1989.

\bibitem{BoutsikasKoutras}
M.~V. Boutsikas and M.~V. Koutras.
\newblock A bound for the distribution of the sum of discrete associated or
  negatively associated random variables.
\newblock {\em Ann. Appl. Probab.}, 10(4):1137--1150, 2000.

\bibitem{C-OMS07}
A.~Coja-Oghlan, C.~Moore, and V.~Sanwalani.
\newblock Counting connected graphs and hypergraphs via the probabilistic
  method.
\newblock {\em Random Structures Algorithms}, 31(3):288--329, 2007.

\bibitem{CoxGrimmett}
J.~T. Cox and G.~Grimmett.
\newblock Central limit theorems for associated random variables and the
  percolation model.
\newblock {\em Ann. Probab.}, 12(2):514--528, 1984.

\bibitem{ER59}
P.~Erd{\H{o}}s and A.~R{\'e}nyi.
\newblock On random graphs. {I}.
\newblock {\em Publ. Math. Debrecen}, 6:290--297, 1959.

\bibitem{Esary/Proschan/Malkup:1967}
J.~D. Esary, F.~Proschan, and D.~W. Walkup.
\newblock Association of random variables, with applications.
\newblock {\em Ann. Math. Statist.}, 38:1466--1474, 1967.

\bibitem{griponb}
V.~Gripon and C.~Berrou.
\newblock Sparse neural networks with large learning diversity.
\newblock {\em IEEE Transactions on Neural Networks}, 22(7):1087--1096, July
  2011.

\bibitem{Janson1}
S.~Janson.
\newblock Poisson approximation for large deviations.
\newblock {\em Random Structures Algorithms}, 1(2):221--229, 1990.

\bibitem{Janson2}
S.~Janson.
\newblock New versions of {S}uen's correlation inequality.
\newblock In {\em Proceedings of the {E}ighth {I}nternational {C}onference
  ``{R}andom {S}tructures and {A}lgorithms'' ({P}oznan, 1997)}, volume~13,
  pages 467--483, 1998.

\bibitem{Kahn_weininger}
J.~Kahn and N.~Weininger.
\newblock Positive association in the fractional fuzzy {P}otts model.
\newblock {\em Ann. Probab.}, 35(6):2038--2043, 2007.

\bibitem{karasik}
N.~Karasik.
\newblock {\em Results on the {S}ums of {A}ssociated {R}andom {V}ariables
  {M}otivated by the {P}arabolic {A}nderson {M}odel}.
\newblock ProQuest LLC, Ann Arbor, MI, 2013.
\newblock Thesis (Ph.D.)--University of California, Irvine.

\bibitem{KL02}
M.~Karo{\'n}ski and T.~{\L}uczak.
\newblock The phase transition in a random hypergraph.
\newblock {\em J. Comput. Appl. Math.}, 142(1):125--135, 2002.
\newblock Probabilistic methods in combinatorics and combinatorial
  optimization.

\bibitem{newman_fkg}
C.~M. Newman.
\newblock Normal fluctuations and the {FKG} inequalities.
\newblock {\em Comm. Math. Phys.}, 74(2):119--128, 1980.

\bibitem{newmanwright1}
C.~M. Newman and A.~L. Wright.
\newblock An invariance principle for certain dependent sequences.
\newblock {\em Ann. Probab.}, 9(4):671--675, 1981.

\bibitem{newmanwright2}
C.~M. Newman and A.~L. Wright.
\newblock Associated random variables and martingale inequalities.
\newblock {\em Z. Wahrsch. Verw. Gebiete}, 59(3):361--371, 1982.

\bibitem{Oliveira}
P.~E. Oliveira.
\newblock An exponential inequality for associated variables.
\newblock {\em Statist. Probab. Lett.}, 73(2):189--197, 2005.

\bibitem{Oliveira2}
P.~E. Oliveira.
\newblock {\em Asymptotics for associated random variables}.
\newblock Springer, Heidelberg, 2012.

\bibitem{DewanRao99}
B.~L. S.~P. Rao and I.~Dewan.
\newblock A general method of density estimation for associated random
  variables.
\newblock {\em Nonparametric Statistics}, 10:405--420, 1999.

\bibitem{romik}
D.~Romik.
\newblock Connectivity {P}atterns in {L}oop {P}ercolation {I}: the
  {R}ationality {P}henomenon and {C}onstant {T}erm {I}dentities.
\newblock {\em Comm. Math. Phys.}, 330(2):499--538, 2014.

\bibitem{suen}
W.-C.~S. Suen.
\newblock A correlation inequality and a {P}oisson limit theorem for
  nonoverlapping balanced subgraphs of a random graph.
\newblock {\em Random Structures Algorithms}, 1(2):231--242, 1990.

\bibitem{Sung}
S.~H. Sung.
\newblock A note on the exponential inequality for associated random variables.
\newblock {\em Statist. Probab. Lett.}, 77(18):1730--1736, 2007.

\bibitem{Xing_Yang2}
G.~Xing and S.~Yang.
\newblock Notes on the exponential inequalities for strictly stationary and
  positively associated random variables.
\newblock {\em J. Statist. Plann. Inference}, 138(12):4132--4140, 2008.

\bibitem{Xing_Yang}
G.~Xing and S.~Yang.
\newblock Some exponential inequalities for positively associated random
  variables and rates of convergence of the strong law of large numbers.
\newblock {\em J. Theoret. Probab.}, 23(1):169--192, 2010.

\bibitem{YangChen}
S.-c. Yang and M.~Chen.
\newblock Exponential inequalities for associated random variables and strong
  laws of large numbers.
\newblock {\em Sci. China Ser. A}, 50(5):705--714, 2007.

\end{thebibliography}
\end{document}